\documentclass[12pt]{amsart}

\usepackage[left=12mm, right=12mm, top=10mm, bottom=15mm]{geometry}

\usepackage[english]{babel}
\usepackage[T1]{fontenc}
\usepackage[utf8]{inputenc}

\theoremstyle{plain}
    \newtheorem{theorem}{Theorem}[section]
    \newtheorem{lemma}[theorem]{Lemma}

    \newtheorem{quest}[theorem]{Question}

\theoremstyle{definition}

    \newtheorem{example}[theorem]{Example}

\usepackage[
    draft = false,
    unicode = true,
    colorlinks = true,
    allcolors = blue,
    hyperfootnotes = true,
    citecolor = red
]{hyperref}
\usepackage{amsfonts,amsmath,amssymb,amscd,amsthm,url}
\usepackage[inline]{enumitem}
\usepackage{graphicx,epsf,afterpage, wrapfig}
\usepackage{soul}
\usepackage{multicol}
\usepackage{array}
\usepackage{braket}
\usepackage{epigraph}
\usepackage{rotating, floatflt}
\usepackage{xstring} 

\usepackage[
backend=biber,
style=numeric-comp,
sorting=none,
giveninits=true
]{biblatex}
\usepackage{csquotes}

\usepackage{xcolor}

\usepackage{ulem}

\usepackage{tikz}
\usetikzlibrary{positioning, calc, intersections, through, arrows, matrix, chains, math}
\usetikzlibrary{arrows.meta}
\usetikzlibrary{shadows}
\usetikzlibrary{graphs}
\usetikzlibrary{graphs.standard}
\usetikzlibrary{patterns}
\usetikzlibrary{decorations.pathreplacing,calligraphy,backgrounds}

\DeclareFieldFormat{usera}{\href{https://arxiv.org/abs/#1}{\texttt{arXiv:#1}}}

\renewbibmacro*{finentry}{\printfield{usera}\newunit\finentry}

\renewbibmacro{in:}{%
  \iffieldundef{usera}
    {\printtext{\bibstring{in}\intitlepunct}}
    {}%
}

\newcommand\norm[1]{\ensuremath{\left\lVert#1\right\rVert}}
\newcommand\abs[1]{\ensuremath{\left\lvert#1\right\rvert}}

\DeclareMathOperator{\vspan}{span}

\newcommand{\Acal}{\mathcal{A}}
\newcommand{\Bcal}{\mathcal{B}}

\newcommand{\Hcal}{\mathcal{H}}

\newcommand{\Mcal}{\mathcal{M}}

\newcommand{\Ucal}{\mathcal{U}}

\DeclareMathOperator{\Idbf}{\mathbf{Id}}

\newcommand{\Abf}{{\ensuremath{\mathbf{A}}}}
\newcommand{\Bbf}{{\ensuremath{\mathbf{B}}}}

\newcommand{\Ebf}{{\ensuremath{\mathbf{E}}}}

\newcommand{\Ibf}{{\ensuremath{\mathbf{I}}}}

\newcommand{\Qbf}{{\ensuremath{\mathbf{Q}}}}
\newcommand{\Rbf}{{\ensuremath{\mathbf{R}}}}

\newcommand{\Ubf}{{\ensuremath{\mathbf{U}}}}

\newcommand{\Phbf}{{\ensuremath{\mathbf{\Phi}}}}

\newcommand{\defing}[1]{\textbf{\emph{\mathversion{bold}#1}}}

\newcommand{\R}{\ensuremath{\mathbb{R}}}

\newcommand{\Cx}{\ensuremath{\mathbb{C}}}

\newcommand{\N}{\ensuremath{\mathbb{N}}}

\renewcommand{\geq}{\geqslant}

\renewcommand{\leq}{\leqslant}

\newcounter{mcnt}

\newcounter{wordcnt}

\begin{document}

\title{Quantum channels on duals of von Neumann algebras \\ in the Schr\"{o}dinger picture}

\author{Sviatoslav V. Dzhenzher}

\begin{abstract}
    The theory of quantum channels is traditionally studied either on finite-dimensional state spaces or within the Heisenberg picture as completely positive maps on $C^*$-algebras. In this paper, we consider quantum channels as completely positive maps on the duals of general von Neumann algebras in the Schr\"{o}dinger picture.
    We investigate the construction of such channels through Pettis integrals using representations of topological groups.
\end{abstract}

\thanks{\hspace{-4mm}
S.\,V. Dzhenzher: sdjenjer@yandex.ru. orcid: 0009-0008-3513-4312
\\
Moscow Institute of Physics and Technology 141701, Institutskii lane, 9, Dolgoprudny, Russia}

\maketitle
\thispagestyle{empty}

\noindent \emph{Keywords:} quantum channels, von Neumann algebras, Schr\"{o}dinger picture, completely positive maps, Pettis integral, non-normal states, quantum semigroups, unitary representations.

\vspace{3mm}
\noindent \emph{MSC 2020:} 
Primary:
81R15; 
46L10; 
Secondary:
81P47; 
46L55; 
28B05. 

\section{Introduction}

In quantum physics, the fundamental task is to investigate the time evolution of physical systems \cite{Dirac-princ-qm}.
While an isolated system can be elegantly described by a unitary evolution via the Schr\"{o}dinger equation, realistic quantum systems are never perfectly shielded from their surrounding environments.
When we consider such non-isolated, open quantum systems \cite{Accardi-Volovich-Lu-2002-open-sys}, their dynamics can no longer be captured by purely unitary operators, which naturally necessitates the formalism of quantum channels to model state transitions, decoherence, and noise.
Thus, the theory of quantum channels serves as a cornerstone for quantum information processing and open quantum systems \cite{Kholevo-qsys-chan-inf, Nielsen-Chuang}.
However, the vast majority of literature in theoretical physics focuses exclusively on quantum channels acting on finite-dimensional state spaces. While this framework is sufficient for many practical applications, it fails to capture the full algebraic complexity of systems with infinite degrees of freedom. 

Channels on general von Neumann algebras are studied significantly less. When infinite-dimensional systems are considered, the conventional approach typically relies on the Heisenberg picture, where quantum channels are modelled as completely positive unital maps on $C^*$-algebras \cite{Kholevo-qprobstat}. This leaves a notable gap in the systematic development of the dual description. 

To address this, some recent works have begun shifting focus towards the Schr\"{o}dinger picture in a more general way.
For instance, in \cite{Amosov-Sakbaev-2013, Amosov-Sakbaev-2025}, quantum channels on whole state spaces (including non-normal states) are investigated; and in \cite{Blecher-Weaver}, the detailed analysis of state spaces on non-separable Hilbert spaces is provided.
Following this direction, in this paper, we consider quantum channels rigorously defined as completely positive maps on the duals of von Neumann algebras, specifically in the Schr\"{o}dinger picture.
As we will demonstrate, specific instances of these channels naturally arise within the framework of singular quantum states developed in \cite{DzhenzherDzhenzherSakbaev26, Dzhenzher26-qsing}.

The paper is organised into five sections.
Section~\ref{s:defs} reviews the core definitions and standard examples of quantum channels.
Section~\ref{s:q-ch} establishes our main results (theorems~\ref{t:q-ch} and~\ref{t:norm-top}), discusses physically motivated examples, and analyses a key non-existence case in Example~\ref{ex:non-meas}.
Section~\ref{s:semigroups} examines the corresponding quantum channel semigroups.
Finally, Section~\ref{s:concl} outlines prospective research paths.

\section{Quantum channels on von Neumann algebras}\label{s:defs}

Let $\Hcal$ be a Hilbert space with the inner product \((\cdot,\cdot)\) being linear in the second argument.

Denote by \(\Bcal(\Hcal)\) the Banach algebra of linear bounded operators \(\Hcal\to\Hcal\).
Let \(\Mcal\subset\Bcal(\Hcal)\) be a von Neumann algebra of \defing{observables}; that is, a weakly closed $*$-algebra of $\Bcal(\Hcal)$ containing the \defing{identity operator} $\Ibf$.


Throughout the paper, \(\Mcal^*\) denotes the Banach dual of $\Mcal$ (but not the predual).

Recall that the observable \(\Abf\in\Mcal\) is \defing{positive} if \((x,\Abf x)\geq 0\) for any \(x\in\Hcal\),
and the functional \(\rho\in\Mcal^*\) is \defing{positive} if \(\Braket{\rho, \Abf} \geq 0\) for any positive \(\Abf\in\Mcal\);
we will denote these \(\Abf\geq 0\) and \(\rho\geq 0\), respectively.
Denote by \(\Sigma(\Mcal) := S^+_1(\Mcal^*)\) the space of \defing{quantum states}, that is, the linear continuous functionals \(\Mcal \to \Cx\), lying at the intersection of the positive cone and the unit sphere.
In other words,
\[
    \Sigma(\Mcal) := \left\{\rho\in\Mcal^*\::\:\rho\geq0,\,\Braket{\rho,\Ibf}=1\right\}.
\]
We denote the \defing{action} of a functional \(\rho \in \Mcal^*\) on an observable \(\Abf\in \Mcal\) by
\[
    \braket{\rho, \Abf} := \rho(\Abf).
\]

For any integer \(n\geq 1\), denote by \(\Mcal_n\) the algebra of complex matrices \(n\times n\).
Note that for a von Neumann algebra \(\Mcal \subset \Bcal(\Hcal)\), the tensor product \(\Mcal\otimes\Mcal_n\) is also a von Neumann algebra of matrices \(n\times n\), whose elements are from \(\Mcal\).
That is, an operator \(\Abf\in\Mcal\otimes\Mcal_n\) acts in a Hilbert space \(\Hcal^n = \bigoplus_{k=1}^n\Hcal\) in a way similar to the usual action of finite matrices on \(\R^n\).
Also, identifying \(\Mcal_n^*\cong\Mcal_n\), note that \((\Mcal\otimes\Mcal_n)^*\cong \Mcal^*\otimes\Mcal_n^* \cong \Mcal^* \otimes \Mcal_n\); see, for example, \cite[Chapter~IV]{Takesaki1979}.

Let \(\Phbf\colon \Mcal^*\to\Mcal^*\) be a linear map.
Recall that \(\Phbf\) is \defing{positive} if \(\Phbf\rho \geq 0\) for any \(\rho\geq 0\).
Following the Heisenberg picture described in \cite{Kholevo-qprobstat}, and both Heisenberg and Schr\"{o}dinger pictures described in \cite{Takesaki1979}, we say that \(\Phbf\) is \defing{completely positive} if for any integer \(n\geq 1\), the map \(\Phbf\otimes\Idbf_n\colon (\Mcal\otimes\Mcal_n)^* \to (\Mcal\otimes\Mcal_n)^*\) defined by
\[
    \Phbf\otimes\Idbf_n(\rho\otimes \sigma) := \Phbf(\rho) \otimes \sigma
\]
is positive.
A completely positive linear map \(\Phbf\colon\Mcal^*\to\Mcal^*\) is called a \defing{quantum channel} if the state space \(\Sigma(\Mcal)\) is invariant under \(\Phbf\), which means \(\Phbf(\Sigma(\Mcal))\subset \Sigma(\Mcal)\).
Note that it differs from the usual definition of quantum channels, where the space of so called normal states should be invariant.
This invariance is the analogue of the trace-preserving property, which is classical when speaking of finite dimensional Hilbert spaces.

In theoretical physics, it is a well known fact that a unitary evolution is a quantum channel.
In our case, in order to have this property, we will need the following notion: we say that a unitary evolution \(\Ubf\in\Ucal(\Hcal)\) \defing{remains in the von Neumann algebra} \(\Mcal\subset\Bcal(\Hcal)\) if \(\Ubf^*\Mcal\Ubf \subset \Mcal\).
For example, if \(\Mcal=\Bcal(\Hcal)\), then all unitary evolutions from \(\Ucal(\Hcal)\) remain in \(\Mcal\); see other examples of unitary evolutions remaining in von Neumann algebras in Section~\ref{s:q-ch}.

\begin{lemma}\label{l:unit-q-ch}
    Let \(\Ubf\in\Ucal(\Hcal)\) be a unitary evolution that remains in a von Neumann algebra \(\Mcal\subset\Bcal(\Hcal)\).
    Then the map \(\Phbf_\Ubf\colon\Mcal^*\to\Mcal^*\) given by
    \[
        \Phbf_\Ubf\rho := \Ubf\rho\Ubf^*
    \]
    is a quantum channel.
\end{lemma}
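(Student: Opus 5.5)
First I fix the meaning of \(\Ubf\rho\Ubf^*\) for a functional \(\rho\in\Mcal^*\). By analogy with density matrices, where \(\operatorname{tr}(\Ubf\rho\Ubf^*\Abf)=\operatorname{tr}(\rho\,\Ubf^*\Abf\Ubf)\), I set
\[
    \Braket{\Ubf\rho\Ubf^*, \Abf} := \Braket{\rho, \Ubf^*\Abf\Ubf}, \qquad \Abf\in\Mcal .
\]
This is well defined precisely because \(\Ubf\) remains in \(\Mcal\): the condition \(\Ubf^*\Mcal\Ubf\subset\Mcal\) puts \(\Ubf^*\Abf\Ubf\) in the domain of \(\rho\). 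Thus \(\Phbf_\Ubf\) is the Banach adjoint of the map \(\alpha\colon\Mcal\to\Mcal\), \(\alpha(\Abf)=\Ubf^*\Abf\Ubf\). The map \(\alpha\) is linear with \(\norm{\alpha(\Abf)}\leq\norm{\Abf}\), so \(\Phbf_\Ubf=\alpha^*\) is a bounded linear map \(\Mcal^*\to\Mcal^*\). The plan is to move every property of \(\Phbf_\Ubf\) back to \(\alpha\) by duality.

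\emph{State invariance.} Let \(\rho\in\Sigma(\Mcal)\). The identity is preserved, since \(\Braket{\Phbf_\Ubf\rho,\Ibf}=\Braket{\rho,\Ubf^*\Ubf}=\Braket{\rho,\Ibf}=1\). Positivity is also preserved: if \(\Abf\geq0\), then \((x,\Ubf^*\Abf\Ubf x)=(\Ubf x,\Abf\Ubf x)\geq0\), so \(\Ubf^*\Abf\Ubf\) is a positive element of \(\Mcal\) and \(\Braket{\Phbf_\Ubf\rho,\Abf}\geq0\). Hence \(\Phbf_\Ubf(\Sigma(\Mcal))\subset\Sigma(\Mcal)\).

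\emph{Complete positivity.} Fix \(n\) and set \(\Ubf_n:=\Ubf\otimes\Ibf_n\), the diagonal matrix with \(\Ubf\) on the diagonal. This is a unitary on \(\Hcal^n\). For \(\Abf=(\Abf_{ij})\in\Mcal\otimes\Mcal_n\) we have \(\Ubf_n^*\Abf\Ubf_n=(\Ubf^*\Abf_{ij}\Ubf)\in\Mcal\otimes\Mcal_n\), so \(\Ubf_n\) remains in \(\Mcal\otimes\Mcal_n\). The key step is to check that, under the identification \((\Mcal\otimes\Mcal_n)^*\cong\Mcal^*\otimes\Mcal_n\),
\[
    \Phbf_\Ubf\otimes\Idbf_n = \Phbf_{\Ubf_n}.
\]
It suffices to compare the two maps on elementary tensors \(\rho\otimes\sigma\) paired with elementary tensors \(\Abf\otimes\Ebf\). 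On both sides this pairing gives \(\Braket{\rho,\Ubf^*\Abf\Ubf}\,\Braket{\sigma,\Ebf}\). Both maps are linear, and since \(\Mcal_n\) is finite-dimensional every element of \(\Mcal\otimes\Mcal_n\) is a finite sum of elementary tensors, so the two maps agree everywhere. The positivity argument from the previous paragraph, applied to \(\Mcal\otimes\Mcal_n\) and the unitary \(\Ubf_n\), then shows that \(\Phbf_\Ubf\otimes\Idbf_n\) is positive.

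The main obstacle is this identification step. One must check that the algebraic definition of \(\Phbf\otimes\Idbf_n\) on elementary tensors \(\rho\otimes\sigma\) extends unambiguously to all of \((\Mcal\otimes\Mcal_n)^*\) and agrees with the dual of \(\alpha\otimes\mathrm{id}_n\). I would handle this concretely. An element of \((\Mcal\otimes\Mcal_n)^*\) is an \(n\times n\) matrix \((\rho_{ij})\) of elements of \(\Mcal^*\), acting by \(\Abf\mapsto\sum_{i,j}\Braket{\rho_{ij},\Abf_{ij}}\). In this picture \(\Phbf\otimes\Idbf_n\) acts entrywise, \((\rho_{ij})\mapsto(\Phbf\rho_{ij})\). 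Written this way, the identity \(\Phbf_\Ubf\otimes\Idbf_n=(\alpha\otimes\mathrm{id}_n)^*=\Phbf_{\Ubf_n}\) is a direct entrywise computation.
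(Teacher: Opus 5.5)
Your proof is correct and follows the paper's own argument. You define $\Phbf_\Ubf$ by duality, $\Braket{\Ubf\rho\Ubf^*,\Abf}=\Braket{\rho,\Ubf^*\Abf\Ubf}$, which is well defined because $\Ubf^*\Mcal\Ubf\subset\Mcal$; you check $\Braket{\Phbf_\Ubf\rho,\Ibf}=1$; and you reduce complete positivity to $\Phbf_\Ubf\otimes\Idbf_n=\Phbf_{\Ubf\otimes\Ibf_{n\times n}}$. Beyond that, you fill in what the paper calls clear: positivity via $(x,\Ubf^*\Abf\Ubf x)=(\Ubf x,\Abf\Ubf x)\geq 0$, and the check of that identity on elementary tensors and entrywise on matrices $(\rho_{ij})$.
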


\begin{proof}
    First, let us prove the correctness of the definition.
    Since the unitary evolution given by \(\Ubf\) remains in \(\Mcal\),
    for any \(\rho\in\Mcal^*\), the unitary evolution \(\Ubf\rho\Ubf^*\), which is given by
    \[
        \Braket{\Ubf\rho\Ubf^*,\Abf} = \Braket{\rho, \Ubf^*\Abf\Ubf},
    \]
    remains in \(\Mcal^*\).
    Hence \(\Phbf_\Ubf\) is well defined.

    Now, the linearity and positivity are clear.
    It is also clear that \(\Phbf_\Ubf\Sigma(\Mcal)\subset \Sigma(\Mcal)\), since
    \[
        \Braket{\Phbf_\Ubf\rho, \Ibf} = \Braket{\rho, \Ubf^*\Ibf\Ubf} = \Braket{\rho, \Ibf}=1.
    \]
    Finally, the complete positivity follows since for any integer \(n\geq 1\) the map
    \[
        \Phbf_\Ubf\otimes \Idbf_n = \Phbf_{\Ubf\otimes \Ibf_{n\times n}}
    \]
    is positive.
\end{proof}

Below, we give some examples of generalisations of classical quantum channels on finite-dimensional Hilbert spaces to arbitrary Hilbert spaces.

\begin{example}
    Any finite convex combination of quantum channels is a quantum channel.
\end{example}

\begin{example}[Depolarising channel]
    Let \(\Hcal\) be a Hilbert space, and \(\Mcal\subset\Bcal(\Hcal)\) be a von Neumann algebra.
    If $\Hcal$ is infinite-dimensional, there exists no ``maximally mixed state'' \(\frac{\Ibf}{dim}\).
    So, instead of this, fix any state \(\rho_0\in\Sigma(\Mcal)\), \(q\in(0,1)\), and define the channel by
    \[
        \Phbf\rho := (1-q)\rho + q\Braket{\rho, \Ibf}\rho_0.
    \]
\end{example}

\begin{example}[Erasure channel]
    Let \(\Hcal\) be a Hilbert space, and \(\Mcal\subset\Bcal(\Hcal)\) be a von Neumann algebra.
    Let \(e=\ket{e}\in\Cx^2\) be a unit vector.
    In the space \(\Hcal\oplus \Cx e\), consider the von Neumann algebra \(\widetilde\Mcal := \Mcal \oplus \Cx\).
    For a functional \(\rho\in \Mcal^*\), define \(\widetilde\rho\in(\widetilde\Mcal)^*\) by
    \[
        \Braket{\widetilde\rho,\Abf \oplus c} := \Braket{\rho,\Abf}.
    \]
    Also, define the fixed functional \(\widetilde\rho_e\in(\widetilde\Mcal)^*\) by
    \[
        \Braket{\widetilde\rho_e,\Abf \oplus c} := c.
    \]
    Then, the erasure channel for \(q\in(0,1)\) is
    \[
        \Phbf \rho := (1-q)\widetilde\rho + q \Braket{\rho,\Ibf} \widetilde\rho_e.
    \]
    Strictly speaking, as in the classical physics, this is the quantum channel \(\Mcal^*\to(\widetilde\Mcal)^*\).
\end{example}

\begin{example}[Dephasing channel]
    Let \(\Hcal\) be a Hilbert space, and \(\Mcal\subset\Bcal(\Hcal)\) be a von Neumann algebra.
    Let \(\Acal\subset\Mcal\) be its MASA, and \(\Ebf \colon \Mcal\to\Acal\subset\Mcal\) be a normal conditional expectation; see necessary definitions in \cite{Farah-Wofsey, Takesaki2003}.
    Then \(\Ebf^*\colon\Mcal^*\to\Mcal^*\) is a quantum channel, and for \(q\in(0,1)\) so is
    \[
        \Phbf := (1-q)\Idbf + q\Ebf^*.
    \]
\end{example}



\section{Quantum channels as Pettis integrals}\label{s:q-ch}

In the theorems below, we give the core examples of quantum channels.
They will be constructed as averages of unitary evolutions by some measure on a group $G$ and a representation \(\pi\colon G \to\Ucal(\Hcal)\),
where \(\Ucal(\Hcal)\subset\Bcal(\Hcal)\) is the group of unitary operators on \(\Hcal\).
Recall that the Pettis integral
\[
    \rho := \int \rho_x\,d\mu(x)
\]
is defined by the actions
\[
    \Braket{\rho,\Abf} = \int \Braket{\rho_x,\Abf}\,d\mu(x).
\]
For the definition and properties of integrals over finitely additive measures, see, for example, \cite{Dunford-Schwartz-vol1}.

\begin{theorem}\label{t:q-ch}
    Let \(\Hcal\) be a Hilbert space.
    Let \(G\) be a group.
    Let \(\pi\colon G\to\Ucal(\Hcal)\) be a representation of groups such that all unitary evolutions from \(\pi(G)\) remain in the von Neumann algebra \(\Mcal\subset\Bcal(\Hcal)\).
    
    Let \(\mu\colon2^G\to[\,0,1\,]\) be a finitely additive measure defined on all subsets of \(G\) with \(\mu(G)=1\).
    Define the map \(\Phbf_{\pi,\mu}\colon \Mcal^*\to\Mcal^*\) by the Pettis integral
    \[
        \Phbf_{\pi,\mu} \rho := \int \pi(g)\rho\pi(g)^*\,d\mu(g).
    \]
    
    Then the map \(\Phbf_{\pi,\mu}\) is a quantum channel.
\end{theorem}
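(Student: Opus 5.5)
The plan is to first show that the Pettis integral defines an element of \(\Mcal^*\) for every \(\rho\in\Mcal^*\), and then to check linearity, invariance of \(\Sigma(\Mcal)\), and complete positivity. Each of the latter three is inherited from the corresponding property of the single unitary channels \(\Phbf_{\pi(g)}\) of Lemma~\ref{l:unit-q-ch}, by passing it through the integral.

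\emph{Well-definedness.} Fix \(\rho\in\Mcal^*\) and \(\Abf\in\Mcal\). Consider the function \(f_\Abf(g):=\Braket{\pi(g)\rho\pi(g)^*,\Abf}=\Braket{\rho,\pi(g)^*\Abf\pi(g)}\). It is well defined because \(\pi(g)\) remains in \(\Mcal\). It is bounded, since \(\abs{f_\Abf(g)}\leq\norm{\rho}\norm{\Abf}\).

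The measure \(\mu\) is finitely additive, defined on all of \(2^G\), and has \(\mu(G)=1\). Every bounded function on \(G\) is therefore a uniform limit of simple functions, each of which is measurable with respect to \(2^G\). Hence every bounded function is integrable in the sense of \cite{Dunford-Schwartz-vol1}, and
\[
\Bigl|\int f\,d\mu\Bigr|\leq\sup\abs{f}.
\]
So \(\Abf\mapsto\int f_\Abf\,d\mu\) is a linear functional with norm at most \(\norm{\rho}\); linearity in \(\Abf\) comes from linearity of the integral. This functional lies in \(\Mcal^*\) and is by definition \(\Phbf_{\pi,\mu}\rho\). Linearity of \(\Phbf_{\pi,\mu}\) in \(\rho\) follows the same way. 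I expect this step to be the main obstacle, since it is the only one using the specific features of finitely additive integration. Nothing like measurability or continuity of \(\pi\) is assumed, so one must rely on \(\mu\) being defined on the full power set.

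\emph{States and positivity.} If \(\rho\in\Sigma(\Mcal)\) and \(\Abf\geq0\), Lemma~\ref{l:unit-q-ch} gives \(f_\Abf\geq0\) pointwise. Monotonicity of the integral for nonnegative bounded functions then gives \(\Braket{\Phbf_{\pi,\mu}\rho,\Abf}\geq0\). For the normalisation, \(f_\Ibf\equiv1\), so \(\Braket{\Phbf_{\pi,\mu}\rho,\Ibf}=\mu(G)=1\).

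\emph{Complete positivity.} Fix \(n\). The map \(g\mapsto\pi(g)\otimes\Ibf_{n\times n}\) is again a unitary representation \(\pi_n\) of \(G\) on \(\Hcal^n\), and it remains in \(\Mcal\otimes\Mcal_n\), since conjugation acts entrywise on the matrix. The key identity to prove is
\[
\Phbf_{\pi,\mu}\otimes\Idbf_n=\Phbf_{\pi_n,\mu}.
\]
It is enough to check it on elementary tensors \(\rho\otimes\sigma\) against elementary tensors \(\Abf\otimes\Bbf\), and then extend by linearity. Both sides are bounded linear maps, and for finite \(n\) every element of \(\Mcal\otimes\Mcal_n\) is a finite sum of such elementary tensors (matrix units), so no density argument is needed. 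On elementary tensors both sides equal \(\int\Braket{\rho,\pi(g)^*\Abf\pi(g)}\,d\mu(g)\cdot\Braket{\sigma,\Bbf}\). Positivity of \(\Phbf_{\pi_n,\mu}\) then follows from the previous paragraph, applied to \(\pi_n\) and \(\Mcal\otimes\Mcal_n\).
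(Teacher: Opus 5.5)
Your proposal is correct and follows the paper's route: each property is inherited from the single-unitary channels of Lemma~\ref{l:unit-q-ch} by passing it through the integral. Beyond the paper's one-line proof, you also make explicit the integrability of bounded functions against a finitely additive $\mu$ on $2^G$ and the identity $\Phbf_{\pi,\mu}\otimes\Idbf_n=\Phbf_{\pi_n,\mu}$, which mirrors the proof of the lemma. One small slip: positivity should be stated for all $\rho\geq 0$, not only $\rho\in\Sigma(\Mcal)$, since that is what complete positivity requires, but your pointwise argument applies verbatim.
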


\begin{proof}
    The linearity, complete positivity, and invariance of \(\Sigma(\Mcal)\) follow by Lemma~\ref{l:unit-q-ch}, since the Pettis integral respects all these properties.
\end{proof}

Theorem~\ref{t:q-ch} implies that the linear maps discussed in \cite{Dzhenzher26-qsing} are quantum channels in the sense of this article.
In particular, this covers the case when \(\pi(g)\) are the shift operators.

Now we will focus our attention on the measures that are not defined on the whole power set \(2^G\).
One may find this more natural since it covers the cases of the Lebesgue measure or probability measures on \(\R\).

\begin{theorem}\label{t:norm-top}
    Let \(\Hcal\) be a Hilbert space.
    Let \(G\) be a topological group, and \(\Bcal_G \subset 2^G\) be the Borel $\sigma$-algebra of the open subsets of~$G$.
    Let \(\pi\colon G\to\Ucal(\Hcal)\) be a representation of groups, which is continuous in the norm topology,
    and such that all unitary evolutions from \(\pi(G)\) remain in the von Neumann algebra \(\Mcal\subset\Bcal(\Hcal)\).
    
    Let \(\mu\colon\Bcal_G\to[\,0,1\,]\) be a finitely additive measure with \(\mu(G)=1\).
    Define the map \(\Qbf_{\pi,\mu}\colon \Mcal^*\to\Mcal^*\) by the Pettis integral
    \[
        \Qbf_{\pi,\mu} \rho := \int \pi(g)\rho\pi(g)^*\,d\mu(g).
    \]
    
    Then the map \(\Qbf_{\pi,\mu}\) is a quantum channel.
\end{theorem}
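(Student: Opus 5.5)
The plan is to reduce everything to the scalar integrals $g\mapsto \Braket{\rho,\pi(g)^*\Abf\pi(g)}$ and argue as in Theorem~\ref{t:q-ch}. The only new issue is that $\mu$ is now defined only on $\Bcal_G$. So the integrand must be $\Bcal_G$-measurable, in fact integrable with respect to the finitely additive $\mu$ in the sense of \cite{Dunford-Schwartz-vol1}. Once the Pettis integral is shown to make sense, linearity, positivity, complete positivity and invariance of $\Sigma(\Mcal)$ follow exactly as before from Lemma~\ref{l:unit-q-ch}.

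\textbf{Step 1: norm continuity of the integrand.} Fix $\rho\in\Mcal^*$ and $\Abf\in\Mcal$, and set $f(g):=\Braket{\rho,\pi(g)^*\Abf\pi(g)}$. For $g,h\in G$ write
\[
\pi(g)^*\Abf\pi(g)-\pi(h)^*\Abf\pi(h) = (\pi(g)-\pi(h))^*\Abf\pi(g) + \pi(h)^*\Abf(\pi(g)-\pi(h)).
\]
This gives $\norm{\pi(g)^*\Abf\pi(g)-\pi(h)^*\Abf\pi(h)}\leq 2\norm{\Abf}\norm{\pi(g)-\pi(h)}$. By norm continuity of $\pi$, the map $g\mapsto\pi(g)^*\Abf\pi(g)$ is continuous from $G$ into $\Mcal$ with the norm topology; it lands in $\Mcal$ because $\pi(G)$ remains in $\Mcal$. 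Hence $f$ is continuous, since $\rho$ is norm continuous. Moreover $\abs{f(g)}\leq\norm{\rho}\norm{\Abf}$.

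\textbf{Step 2: integrability.} This is the main obstacle. With a finitely additive $\mu$, Borel measurability alone does not suffice; in Dunford–Schwartz one needs $f$ to be a uniform limit of $\Bcal_G$-simple functions. I would get this from boundedness and continuity. The image $f(G)$ lies in a compact disc, which I cover by finitely many Borel pieces $D_1,\dots,D_m$ of diameter $<\varepsilon$. The preimages $E_j:=f^{-1}(D_j)$ are then Borel sets in $G$ by continuity. Choosing $c_j\in D_j$, the simple function $\sum_j c_j\mathbf 1_{E_j}$ is $\varepsilon$-close to $f$ uniformly. So $f$ is a uniform limit of simple functions, and $\int f\,d\mu$ exists and is unique, with $\abs{\int f\,d\mu}\leq\norm{\rho}\norm{\Abf}$.

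\textbf{Step 3: the functional and its properties.} Define $\Braket{\Qbf_{\pi,\mu}\rho,\Abf}:=\int f\,d\mu$. This is linear in $\Abf$ and bounded by $\norm{\rho}\norm{\Abf}$, so it lies in $\Mcal^*$; it is also linear in $\rho$. If $\rho\geq0$ and $\Abf\geq0$, then $f\geq0$ pointwise, so the integral is nonnegative by monotonicity. For $\Abf=\Ibf$ we get $f\equiv\Braket{\rho,\Ibf}$, hence $\Qbf_{\pi,\mu}\Sigma(\Mcal)\subset\Sigma(\Mcal)$.

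For complete positivity, note that $\Qbf_{\pi,\mu}\otimes\Idbf_n=\Qbf_{\pi\otimes\Ibf_{n\times n},\mu}$ on $(\Mcal\otimes\Mcal_n)^*$, since both act on the same $\Mcal^*\otimes\Mcal_n$ generators by the same formula. The representation $\pi\otimes\Ibf_{n\times n}$ is again norm continuous and remains in $\Mcal\otimes\Mcal_n$. Positivity of the right-hand side then follows from the positivity just proved, applied to $\Mcal\otimes\Mcal_n$.
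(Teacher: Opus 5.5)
Your proposal is correct and follows the paper's route. The paper gets continuity of $g\mapsto\Braket{\rho,\pi(g)^*\Abf\pi(g)}$ from norm continuity of $\pi$ and then reuses the argument of Theorem~\ref{t:q-ch}. Your Step~2 adds a useful detail the paper leaves implicit in ``hence is $\Bcal_G$-measurable'': the uniform approximation by $\Bcal_G$-simple functions is what makes the integral exist for a merely finitely additive $\mu$. For that step, take the $D_j$ pairwise disjoint.
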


\begin{proof}
    The continuity of \(\pi\) gives that for any \(\Abf\in\Mcal\), the mapping
    \[
        g\mapsto \Braket{\pi(g)\rho\pi(g)^*, \Abf}
    \]
    is continuous, and hence is \(\Bcal_G\)-measurable.
    Now the remaining part of the proof is as of the proof of Theorem~\ref{t:q-ch}.
\end{proof}

Cf.~Theorem~\ref{t:norm-top} to the results on quantum channels in \cite{DzhenzherDzhenzherSakbaev26}.

\begin{example}
    Let \(\Hcal = \Cx^2\) be the $1$-qubit space.
    Let \(G=SU(2)\) be the group of rotations, and \(\pi\colon g\mapsto g\).
    Let \(\Mcal=\Bcal(\Hcal)\).
    Then the assumptions of Theorem~\ref{t:norm-top} are satisfied for any Borel measure $\mu$.
\end{example}

\begin{example}
    Let \(\Hcal = \ell_2(\N)\), and \(\Mcal=\Bcal(\Hcal)\).
    Let \(G = (\R,+)\), and \(\pi(g)\) be the operator of multiplication on the function \(n\mapsto e^{\frac{ig}{n}}\).
    Then \(\pi\) is continuous in the norm topology.
    Indeed,
    \[
        \norm{\pi(g)-\Ibf} = \sup_{n\in\N} \abs{e^{\frac{ig}{n}} - 1} \leq \abs{g} \to 0.
    \]
\end{example}

\begin{example}
    Let \(\Hcal = \ell_2(\R)\) be a space of functions \(f\colon\R\to\Cx\) with the at most countable support, and such that the sequence of values \(f(x)\) for $x$ from the support of $f$ lies in the space \(\ell_2(\N)\) of sequences.
    Let \(G = (\R,+)\), and \(\pi(g)\) be the operator of multiplication on the function \(x\mapsto e^{ig\cos(x)}\).
    Then \(\pi\) is continuous in the norm topology.
    Indeed,
    \[
        \norm{\pi(g)-\Ibf} = \sup_{z\in[\,-1,1\,]} \abs{e^{igz} - 1} \leq \sup_{z\in[\,-1,1\,]} \abs{gz} \xrightarrow[g\to 0]{}0.
    \]

    Let \(\Rbf \in \Bcal(\Hcal)\) be the reflection operator acting like
    \[
        \Rbf f(x) = f(-x).
    \]
    Let \(\Mcal\) be a von Neumann algebra of operators commuting with \(\Rbf\).
    Then all unitary evolutions from \(\pi(G)\) remain in \(\Mcal\), since \(\pi(g)\) commutes with \(\Rbf\).

    As another example, one may take \(\Mcal\) as the direct sum \(\bigoplus_{k=1}^n\Bcal(\ell_2(X_k))\) for some decomposition \(\R=\bigsqcup_{k=1}^n X_k\).
\end{example}

\begin{example}
    Analogously to the previous example, one may take \(\Hcal = L_2([\,-1,1\,])\).
\end{example}

It is natural to ask whether in Theorem~\ref{t:norm-top} the SOT-continuous representation $\pi$ can be taken.
Unfortunately, this is not true even in case of separable Hilbert spaces, as the example below shows.

\begin{example}\label{ex:non-meas}
    Let \(\Hcal:= \ell_2(\N) \otimes \Cx^2\).
    Let \(\{e_n\}\) be a standard basis in \(\ell_2(\N)\).
    Let \(G\) be an infinite torus \((\mathbb{T}^\N, \cdot)\), which consists of elements \(g=(e^{i\theta_1},e^{i\theta_2},\ldots)\), with the product topology.
    Let
    \[
        \Ubf(\theta) :=
        \begin{pmatrix}
            \cos\theta & -\sin\theta \\ \sin\theta & \cos\theta
        \end{pmatrix}
    \]
    be a rotation matrix in \(\Cx^2\).
    Define the representation \(\pi\colon G\to\Ucal(\Hcal)\) by
    \[
        \pi(g)(e_n\otimes v) := e_n \otimes \Ubf(\theta_n)v.
    \]
    Below we show that \(\pi\) is SOT-continuous.
    Indeed, take any \(g_\alpha\to g\).
    For \(x = e_n\otimes v\) it is clear that \(\pi(g_\alpha)x \to \pi(g)x\).
    Then it is clear for any \(x\in \Hcal_0:= \vspan(e_n\otimes v)\).
    Finally, for arbitrary \(x\in\Hcal\), take \(x_0\in\Hcal_0\) which is \(\frac{\varepsilon}{3}\)-close to $x$.
    Then
    \[
        \norm{\pi(g_\alpha)x - \pi(g)x} \leq
        \norm{\pi(g_\alpha)(x-x_0)} + \norm{\pi(g_\alpha)x_0 - \pi(g)x_0} + \norm{\pi(g)(x_0-x)} \leq
        \norm{\pi(g_\alpha)x_0 - \pi(g)x_0} + \frac{2\varepsilon}{3}.
    \]
    It remains to take \(\alpha\) large enough so that \(\norm{\pi(g_\alpha)x_0 - \pi(g)x_0} < \frac{\varepsilon}{3}\).

    Now let \(\Mcal := \ell_\infty(\N)\otimes \Mcal_2\) be the von Neumann algebra of block-diagonal operators.
    It is clear from the definition that \(\pi(g)\) remains in $\Mcal$.
    Take \(\Abf := I \otimes \begin{pmatrix}
        1 & 0 \\ 0 & -1
    \end{pmatrix}\).
    Then \(\pi(g)^*\Abf\pi(g)\) is the block-diagonal operator, with its $n$-th block equal to
    \[
        \Ubf(\theta_n)^*\begin{pmatrix}
        1 & 0 \\ 0 & -1
        \end{pmatrix}\Ubf(\theta_n) =
        \begin{pmatrix}
            \cos(2\theta_n) &-\sin(2\theta_n) \\ -\sin(2\theta_n) & -\cos(2\theta_n)
        \end{pmatrix}.
    \]

    Now let \(U\) be a non-principal ultrafilter on $\N$.
    Define the state \(\rho\in\Mcal^*\) by
    \[
        \Braket{\rho, \Bbf} := \lim_{n\to U} (\Bbf_n)_{1,1},
    \]
    where \((\Bbf_n)_{1,1},\) denotes the upper-left element of the $n$-th $2\times 2$-block of $\Bbf$.
    Hence,
    \[
        \Braket{\rho, \pi(g)^*\Abf\pi(g)} = \lim_{n\to U} \cos(2\theta_n).
    \]

    In the rest of the example, we show that the map
    \[
        \psi\colon g \mapsto \lim_{n\to U} \cos(2\theta_n)
    \]
    is not \(\Bcal_G\)-measurable.
    Consider the subset \(G_0\subset G\) of elements $g$ for which \(\theta_n \in \{0,\frac{\pi}{2}\}\), where \(\pi\approx 3.14\) (and not the representation).
    Then $G_0$ is homeomorphic to the classical Cantor set \(\{0,1\}^\N\); in particular, it is closed.
    Now, for any \(g\in G_0\), let \(N_g\subset \N\) be a subset of integers such that
    \[
        n \in N_g \Longleftrightarrow \theta_n = 0.
    \]
    Hence, for \(g\in G_0\)
    \[
        \psi(g) = \lim_{n\to U}\cos(2\theta_n) = \begin{cases}
            1, &N_g \in U. \\ -1, &N_g \notin U.
        \end{cases}
    \]
    Therefore,
    \[
        \psi^{-1}(1) \cap G_0 = \{g \in G_0 : N_g \in U\} \cong U,
    \]
    where in the latter equality we identify the elements of \(\{0,1\}^\N\) with the subsets of $\N$.
    It remains to use the folklore fact that any non-principal ultrafilter on $\N$ does not have the Baire property, and thus is not Borel measurable (the other argument can be that by Sierpi\'{n}ski's theorem \cite[Theorem~6.1]{Blass2010}, a non-principal ultrafilter on $\N$ is not Lebesgue measurable, and thus again is not Borel measurable). This contradicts the fact that the intersection of a Borel pre-image \(\psi^{-1}(1)\) and the closed set $G_0$ should be Borel.
\end{example}

\section{Quantum channels semigroups}\label{s:semigroups}

In this section, following the ideas from \cite{DzhenzherDzhenzherSakbaev26, Amosov-Sakbaev-2025}, we consider the quantum channels from the point of view of the semigroups.
We focus our attention on the families \(\{\mu_t\}_{t\geq 0}\) of $\sigma$-additive measures that form semigroups by convolution, which means,
\[
    \mu_s * \mu_t = \mu_{s+t}
    \quad\text{for all $s,t\geq 0$}
\]
Recall that this means that for any integrable function \(f\colon G\to\Cx\),
\[
    \int f(g)\,d\mu_{s+t}(g) = \int d\mu_t(u) \int f(uv)\,d\mu_s(v).
\]
Note that \(\mu_s * \mu_t = \mu_{s+t} = \mu_t*\mu_s\) even for non-abelian groups, since we may just swap $s$ and $t$.

\begin{example}
    Take \(G=(\R,+)\) and \(\mu_t\) to be the Gaussian measures with zero mean and variances equal to $t$.
\end{example}

\begin{theorem}
    Let \(\{\mu_t\}_{t\geq 0}\) be the family of $\sigma$-additive measures, that form semigroups by convolution.
    Then, under the assumptions of Theorem~\ref{t:q-ch}, the quantum channels \(\left\{\Phbf_{\pi,\mu_t}\right\}_{t\geq0}\) form a semigroup;
    and under the assumptions of Theorem~\ref{t:norm-top}, the quantum channels \(\left\{\Qbf_{\pi,\mu_t}\right\}_{t\geq0}\) form a semigroup.
\end{theorem}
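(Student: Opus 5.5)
We need to show \(\Phbf_{\pi,\mu_{s+t}} = \Phbf_{\pi,\mu_s}\Phbf_{\pi,\mu_t}\) (and likewise for \(\Qbf\)), together with the convention that \(\mu_0\) is the Dirac measure at the identity \(e\), so that the \(t=0\) element is \(\Idbf\). The plan is to test both sides against an arbitrary observable \(\Abf\in\Mcal\) and reduce everything to the convolution identity for scalar functions on \(G\).

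Fix \(\rho\in\Mcal^*\) and \(\Abf\in\Mcal\), and set
\[
    f(g) := \Braket{\pi(g)\rho\pi(g)^*,\Abf} = \Braket{\rho,\pi(g)^*\Abf\pi(g)}.
\]
By the definition of the Pettis integral, and writing \(\sigma:=\Phbf_{\pi,\mu_s}\rho\), we unfold the composition as
\[
    \Braket{\Phbf_{\pi,\mu_t}\Phbf_{\pi,\mu_s}\rho,\Abf}
    = \int \Braket{\sigma,\pi(u)^*\Abf\pi(u)}\,d\mu_t(u)
    = \int d\mu_t(u)\int \Braket{\rho,\pi(v)^*\pi(u)^*\Abf\pi(u)\pi(v)}\,d\mu_s(v).
\]
Here we use that \(\pi(u)^*\Abf\pi(u)\in\Mcal\), since the evolutions remain in \(\Mcal\), which is what makes the inner Pettis integral applicable. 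Because \(\pi\) is a homomorphism, \(\pi(u)\pi(v)=\pi(uv)\), so the inner integrand equals \(f(uv)\). The convolution identity \(\int f\,d\mu_{s+t}=\int d\mu_t(u)\int f(uv)\,d\mu_s(v)\) then turns the right-hand side into \(\Braket{\Phbf_{\pi,\mu_{s+t}}\rho,\Abf}\). Since \(\Abf\) is arbitrary, the two functionals coincide. The identity \(\mu_s*\mu_t=\mu_t*\mu_s\) noted before the theorem shows that the order of composition is irrelevant. For \(t=0\), integrating against \(\delta_e\) gives \(f(e)=\Braket{\rho,\Abf}\).

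The main obstacle is justifying the convolution identity for this particular \(f\). Boundedness is automatic: \(\abs{f}\leq\norm{\rho}\norm{\Abf}\). Measurability is the issue. Under the assumptions of Theorem~\ref{t:norm-top}, norm continuity of \(\pi\) makes \((u,v)\mapsto f(uv)\) jointly continuous, hence Borel on \(G\times G\). Fubini for the \(\sigma\)-additive \(\mu_s,\mu_t\) then applies, and the inner function \(u\mapsto\int f(uv)\,d\mu_s(v)\) is measurable. I expect some care is needed over whether the product of Borel \(\sigma\)-algebras coincides with the Borel \(\sigma\)-algebra of \(G\times G\). I would simply interpret ``integrable'' in the convolution definition as covering bounded continuous \(f\), which is the standard setting.

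Under the assumptions of Theorem~\ref{t:q-ch}, every bounded function is integrable with respect to measures defined on all of \(2^G\). The displayed convolution formula is then taken as the definition of the semigroup property for all bounded \(f\), and it is applied directly to our \(f\).
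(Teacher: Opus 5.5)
Your proposal is correct and follows essentially the same route as the paper. You test against $\Abf\in\Mcal$, rewrite $\pi(v)^*\pi(u)^*\Abf\pi(u)\pi(v)=\pi(uv)^*\Abf\pi(uv)$, and apply the convolution identity, which the paper, like you, takes as the definition of $\mu_s*\mu_t=\mu_{s+t}$, to obtain $\Phbf_{\pi,\mu_{s+t}}=\Phbf_{\pi,\mu_t}\Phbf_{\pi,\mu_s}$. Your measurability discussion goes beyond the paper, which says nothing about it, but you can drop Fubini and the product-$\sigma$-algebra worry: the inner integral equals $\Braket{\Qbf_{\pi,\mu_s}\rho,\pi(u)^*\Abf\pi(u)}$, which is continuous in $u$ by the same norm-continuity argument as in Theorem~\ref{t:norm-top}.
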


\begin{proof}
    For any \(\rho\in\Mcal^*\) and \(\Abf\in\Mcal\), we have
    \begin{multline*}
        \Braket{\Phbf_{\pi,\mu_{s+t}}\rho, \Abf} =
        \int \Braket{\pi(g)\rho\pi(g)^*, \Abf}\,d\mu_{s+t}(g) = \\ =
        \int d\mu_t(u)\int \Braket{\pi(uv)\rho\pi(uv)^*, \Abf}\,d\mu_s(v) =
        \int d\mu_t(u)\int \Braket{\pi(v)\rho\pi(v)^*, \pi(u)^*\Abf\pi(u)}\,d\mu_s(v) = \\ =
        \int d\mu_t(u) \Braket{\Phbf_{\pi,\mu_s}\rho, \pi(u)^*\Abf\pi(u)} = \Braket{\Phbf_{\pi,\mu_t}\Phbf_{\pi,\mu_s}\rho,\Abf}.
    \end{multline*}
    Hence
    \[
        \Phbf_{\pi,\mu_{s+t}} = \Phbf_{\pi,\mu_t}\Phbf_{\pi,\mu_s} = \Phbf_{\pi,\mu_s}\Phbf_{\pi,\mu_t}.
    \]
    The proof for $\Qbf_{\pi,\mu_t}$ is analogous.
\end{proof}

\section{Conclusion}\label{s:concl}

In this paper, we have shifted the traditional focus of quantum channel theory from finite-dimensional spaces and the Heisenberg picture to a rigorous exploration of the Schr\"{o}dinger picture on general von Neumann algebras.
By allowing the entire state space $\Sigma(\Mcal)$, including non-normal states, to serve as an invariant subspace, this framework successfully accommodates complex infinite-dimensional algebraic structures.

We established that valid quantum channels can be systematically constructed as averages of unitary evolutions via Pettis integrals. Furthermore, we demonstrated that when the underlying measures form a semigroup under convolution, the resulting families of channels naturally inherit these semigroup properties. 

Future research may focus on a deeper characterisation of the geometric properties of the invariant state spaces under these channels.
Another promising direction is the investigation of the specific class of singular states and their asymptotic behaviour within the constructed semigroups, which could provide deeper insights into the dynamics of continuous-variable open quantum systems.

We end the paper with the following final question.
The positive answer would mean that in some cases the usage of von Neumann algebras is crucial.

\begin{quest}
    For a Hilbert space \(\Hcal\) and a von Neumann algebra \(\Mcal\subset\Bcal(\Hcal)\),
    let \(r\colon \Bcal(\Hcal)^*\to\Mcal^*\) be a standard restriction:
    \[
        r(\rho) := \rho|_\Mcal.
    \]
    Does there exist the Hilbert space \(\Hcal\), the von Neumann algebra \(\Mcal\), and the quantum channel \(\Phbf\colon\Mcal^*\to\Mcal^*\) such that there exists no ``extension'' \(\widetilde\Phbf\colon\Bcal(\Hcal)^*\to\Bcal(\Hcal)^*\) in the sense
    \[
        r\widetilde\Phbf = \Phbf r?
    \]
\end{quest}

\printbibliography

\end{document}